\newtheorem{thm}{Theorem}
\newtheorem{cor}[thm]{Corollary}
\newtheorem{prop}[thm]{Proposition}
\newtheorem{lem}[thm]{Lemma}
\theoremstyle{definition}
\theoremstyle{remark}
\newcommand{\mb}[1]{\mathbb{#1}}
\newcommand{\cF}{\overline {\mb F}}
\newcommand{\Mod}[1]{{#1}\mbox{-}{\text{mod}}}
\newcommand{\Alg}[1]{{#1}\mbox{-}{\text{alg}}}
\title{A note on $H_\infty$ structures}
\author{Tyler Lawson\thanks{Partially supported by NSF
    grant 0805833.}}
\begin{document}
\maketitle

\begin{abstract}
We give a source of (coconnective) examples of $H_\infty$ structures that do not lift to $E_\infty$ structures, based on Mandell's proof of the equivalence between certain cochain algebras and spaces.
\end{abstract}

\section{Introduction}

The diagonal map on a space $X$ gives rise to a structure on its cochains which is not commutative, but associative and commutative up to higher coherences (an $E_\infty$ structure).  From this structure, the Steenrod reduced power operations arise; this was axiomatized in \cite{may-steenrodoperations}.  One perspective would regard this as the structure of an $E_\infty$ algebra on the function spectrum $(H\mb F_p)^X$, parametrizing maps from $X$ into an Eilenberg-Mac Lane spectrum.

Power operations and the Adem relations between them do not make use of the entirety of an $E_\infty$ structure.  In the paper \cite{may-hinfty} and in the book \cite{h-infty}, the notion of an $H_\infty$ ring spectrum was developed.  This structure gives enough data to produce geometric power operations, but only depends on structure in the homotopy category.

An $E_\infty$ structure seems to contain strictly more information than an $H_\infty$ structure.  However, decades passed before Noel produced an example of a ring spectrum with an $H_\infty$ structure that does not lift to an $E_3$ structure \cite{noel-hinftyeinfty}, based on a counterexample to the transfer conjecture due to Kraines and Lauda.

In this paper we observe another source of such examples.  In order to give the reader an idea of where we are going, we present the following.

\begin{thm}
\label{thm:main}
Let $R_k$ be a wedge of Eilenberg-Mac Lane spectra such that $\pi_* R_k$ is isomorphic to the graded ring $\mb F_2[x]/(x^3)$, with $|x| = -2^k$.  For $k > 3$, there exists an $H_\infty$ structure on $R_k$ which does not come from an $E_\infty$ structure.
\end{thm}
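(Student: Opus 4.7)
The argument has two steps: (i) build an $H_\infty$ ring structure on $R_k$ directly, and (ii) rule out any $E_\infty$ structure on $R_k$ for $k>3$ using Mandell's theorem together with Adams' solution of the Hopf invariant one problem. Step (ii) is the cleaner half; step (i) is where most of the work lives.

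For (ii), suppose for contradiction that $R_k$ admits an $E_\infty$ structure. Base changing to $H\overline{\mb F}_2$ yields an $E_\infty$-$H\overline{\mb F}_2$-algebra with homotopy $\overline{\mb F}_2[x]/(x^3)$, $|x|=-2^k$. Mandell's theorem asserts that $E_\infty$-$H\overline{\mb F}_2$-algebras satisfying suitable finiteness and nilpotence hypotheses on their cohomology are realized as the $\overline{\mb F}_2$-cochains of a $2$-complete nilpotent space of finite type. Since our hypothetical algebra is finite and has nilpotent augmentation ideal, one checks (routinely) that it lies in the essential image, producing a space $X$ with $H^*(X;\overline{\mb F}_2)\cong \overline{\mb F}_2[x]/(x^3)$ and $|x|=2^k$. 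The appropriate skeleton of $X$ is then a mod-$2$ projective plane over $S^{2^k-1}$, witnessing an element of Hopf invariant one in dimension $2^k$. Adams' theorem forces $2^k\in\{1,2,4,8\}$, contradicting $k>3$.

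For (i), the $H_\infty$ structure must be constructed without appealing to any underlying space, since by (ii) no such space exists. The strategy is to specify compatible extended-power maps $D_n R_k\to R_k$ in the stable homotopy category, where $D_n R_k$ denotes the $n$th $H\mb F_2$-extended power of $R_k$. Because $R_k$ is a wedge of three Eilenberg-Mac Lane summands, the homotopy of each $D_n R_k$ can be computed using standard Dyer-Lashof techniques for $H\mb F_2$-modules, and the required coherence reduces to a finite algebraic check consistent with the operations $Sq^{2^k}(x)=x^2$ and $Sq^i(x)=0$ for $0<i<2^k$. A more structural realization would exhibit $R_k$ as the $\mb F_2$-cochains of a non-spatial $H_\infty$ ring spectrum---such as a suitable Thom-spectrum construction or a free $H_\infty$ ring on a single cell modulo a relation---where the $H_\infty$ structure is manifest without admitting an $E_\infty$ refinement.

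The main obstacle is step (i): producing an $H_\infty$ structure which genuinely does not come from an $E_\infty$ one. The obstruction in step (ii) is nearly formal once Mandell's equivalence and Adams' theorem are in hand; the creativity lies in the construction and in verifying that it cannot accidentally be promoted so as to land in Mandell's essential image.
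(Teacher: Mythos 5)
Your two-step architecture matches the paper's: existence of the $H_\infty$ structure by an algebraic argument, non-existence of an $E_\infty$ refinement via Mandell's characterization theorem plus Adams' Hopf invariant one theorem. Step (ii) is essentially the paper's Proposition~\ref{prop:einftyfails}, and your streamlined version even avoids the paper's Lemma~\ref{lem:restrict} (you only need the homotopy \emph{ring} of a hypothetical lift, which is determined by the underlying multiplication, rather than matching the full $H_\infty$ $H$-algebra structure). But two things are missing there. First, ``base changing to $H\cF_2$'' is undefined for a bare $E_\infty$ ring spectrum: you must first explain why an $E_\infty$ structure on the coconnective spectrum $R_k$ is automatically an $E_\infty$ $H\mb F_2$-algebra structure. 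The paper gets this from compatibility of connective covers with $E_\infty$ structures (the connective cover of $R_k$ is $H\pi_0 R_k = H\mb F_2$), citing Baker--Richter. Second, your claim to ``rule out any $E_\infty$ structure on $R_k$'' is too strong as literally stated: the underlying spectrum $R_k$ certainly carries square-zero $E_\infty$ structures (e.g.\ as cochains of $S^{2^k}\vee S^{2^{k+1}}$). What your argument actually excludes is any $E_\infty$ structure whose homotopy ring is $\mb F_2[x]/(x^3)$, which is exactly what the theorem needs.

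The genuine gap is in step (i), which you correctly flag as the locus of the work but then leave as an unexecuted plan. The assertion that ``the required coherence reduces to a finite algebraic check'' is precisely what needs justification: a priori an $H_\infty$ structure involves infinitely many compatibility conditions among the maps $D_n R_k \to R_k$. The paper's mechanism for collapsing this to algebra is the chain of facts that (a) $\pi_*$ is an \emph{equivalence} from $Ho(\Mod{H\mb F_2})$ to graded $\mb F_2$-vector spaces, so that the free $H_\infty$ $H\mb F_2$-algebra monad is literally the free Dyer--Lashof algebra monad on homotopy groups and an $H_\infty$ $H\mb F_2$-algebra structure \emph{is} a Dyer--Lashof algebra structure on $\pi_*$, with no higher coherences left over; (b) in the coconnective range with $Q^0$ acting as the identity, such structures are the same as unstable algebras over the Steenrod algebra; and (c) the classical Adem-relation computation shows $\mb F_2[x]/(x^3)$ admits such a structure exactly when $|x|$ is a power of $2$ (the Hopf invariant one obstruction is invisible to primary operations). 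One then forgets down to an $H_\infty$ structure over the sphere; the paper's Lemma~\ref{lem:restrict} guarantees this loses no information since $R_k$ is a wedge of suspensions of $H\mb F_2$. Your alternative suggestions (a Thom spectrum, or a free $H_\infty$ ring on a cell modulo a relation) are not substantiated and are not how the paper proceeds; without ingredient (a) above, your step (i) does not yet constitute a proof.
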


The $H_\infty$ structures in the theorem can still be defined for $k \leq 3$, but these do come from $E_\infty$ algebras: specifically, the function spectra $(H\mb F_2)^{\mb{RP}^2}$, $(H\mb F_2)^{\mb{HP}^2}$, and $(H\mb F_2)^{\mb{OP}^2}$.

In fact, our construction is essentially equivalent to the Hopf invariant one problem.  We observe that an $H_\infty$ structure on such a spectrum can be specified by describing Dyer-Lashof operations on the homotopy groups, and that these Dyer-Lashof operations are determined by the underlying $H_\infty$ structure.  However, to have a lift to an $E_\infty$ structure, compatibility with the connective cover and results of Mandell \cite{mandell-einftypadic} imply that we must be able to find a space with this cohomology algebra.

Both this paper and Noel's rely on heavy machinery.  (A more direct proof here would lift Adams' proof from secondary Steenrod operations to secondary Dyer-Lashof operations, which would be intrinsically worthwhile.)  Folklore has it that an $H_\infty$ structure does not automatically determine an $A_4$-structure in the sense of Stasheff's associahedra, and so one might suspect that a simpler obstruction exists in terms of Toda brackets.

In addition, it is not clear in either case whether there are situations where there is a ring spectrum which {\em admits} an $H_\infty$ structure, but not a (possibly unrelated) $E_\infty$ structure, because both results rely on the impossibility of lifting a fixed $H_\infty$ structure.

\section{$H_\infty$ structures}

We will work in a highly developed category of spectra where smash-powers are homotopically well-behaved, and $E_\infty$ algebras and commutative ring objects have the same homotopy theory; these include the $S$-modules of \cite{ekmm} and the category of symmetric spectra with the positive stable model structure (the S-model structure of \cite{shipley-convenient}).  We leave the reader to fill in appropriate cofibrancy assumptions.

Let $R$ be a commutative ring object in spectra, with category $\Mod{R}$ of $R$-modules.  There is a monad $\mb P_R$ whose algebras are precisely commutative $R$-algebras:
\[
\mb P_R(M) = \bigvee_{m \geq 0} M^{\wedge_R m} / \Sigma_m
\]
Write $\Alg{R}$ for the category of commutative $R$-algebras.

\begin{prop}
\label{prop:freeforget}
There is a natural isomorphism $T \wedge_R \mb P_R(M) \cong \mb P_T(T \wedge_R M)$.
\end{prop}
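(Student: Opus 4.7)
The plan is to exploit that extension of scalars along the unit map $R \to T$ is a strong symmetric monoidal left adjoint between the module categories, and then to check that the formula defining $\mb P$ is preserved by any such functor.

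First I would recall the two formal properties of base change. Because $T \wedge_R (-) \co \Mod R \to \Mod T$ is left adjoint to restriction along $R \to T$, it preserves all colimits; in particular it commutes with arbitrary wedges and with orbits by finite group actions (which are reflexive coequalizers). Second, since $T$ is a commutative $R$-algebra, there is a natural isomorphism
\[
T \wedge_R (M_1 \wedge_R \cdots \wedge_R M_m) \;\cong\; (T \wedge_R M_1) \wedge_T \cdots \wedge_T (T \wedge_R M_m),
\]
i.e., base change is strong symmetric monoidal. This is the standard fact that the relative smash product is computed as a coequalizer and that $T \wedge_R T \cong T$ as $T$-bimodules in the commutative setting.

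Assembling these ingredients, I would write out the chain of natural isomorphisms
\[
T \wedge_R \mb P_R(M) \;=\; T \wedge_R \Bigl(\bigvee_{m\geq 0} M^{\wedge_R m}/\Sigma_m \Bigr) \;\cong\; \bigvee_{m\geq 0} T \wedge_R \bigl(M^{\wedge_R m}/\Sigma_m\bigr),
\]
using that $T \wedge_R (-)$ preserves wedges. Then, using that it preserves colimits and so commutes with the $\Sigma_m$-orbits, and the symmetric monoidality above, the $m$-th summand becomes
\[
\bigl(T \wedge_R M^{\wedge_R m}\bigr)/\Sigma_m \;\cong\; (T \wedge_R M)^{\wedge_T m}/\Sigma_m.
\]
Wedging over $m$ yields $\mb P_T(T \wedge_R M)$, as desired. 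Naturality in $M$ is automatic from naturality of each isomorphism used.

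The only step requiring real care is the symmetric monoidality of base change at the point-set level in the chosen category of spectra (S-modules or positive-stable symmetric spectra); this is where the implicit cofibrancy assumptions matter, since one wants $T \wedge_R (-)$ to model its derived functor and to commute with the iterated relative smash product and with $\Sigma_m$-orbits on the nose. Once one grants the standard symmetric-monoidal Quillen equivalence between $R$-modules and $T$-modules via base change, which is available in both settings cited, the calculation above goes through without further work.
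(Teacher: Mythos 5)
Your argument is correct, but it is a genuinely different proof from the one in the paper. The paper does not unwind the formula for $\mb P_R$ at all: it observes that both $T \wedge_R \mb P_R(-)$ and $\mb P_T(T \wedge_R -)$ are left adjoint to the same composite forgetful functor $\Alg{T} \to \Mod{R}$ (going through $\Mod{T}$ on one side and through $\Alg{R}$ on the other), and concludes by uniqueness of adjoints. That route is shorter, produces the isomorphism as one of commutative $T$-algebras rather than merely of $T$-modules, and identifies it on the nose with the canonical comparison map --- a point the paper uses immediately afterwards, when it describes the isomorphism as adjoint to $\mb P_R(M) \to \mb P_R(T \wedge_R M) \to \mb P_T(T \wedge_R M)$. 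Your computation, by contrast, makes explicit exactly which point-set facts are consumed (preservation of wedges and $\Sigma_m$-orbits by the left adjoint $T \wedge_R (-)$, and its strong symmetric monoidality, including the $\Sigma_m$-equivariance of the monoidal coherence isomorphism, which you should state since it is what lets you pass the orbits through), and it would apply to any monad given by such a colimit formula even absent a clean adjoint description. If you take your route, you should add a sentence checking that the isomorphism you build agrees with the canonical natural transformation, since that compatibility is needed later. One terminological slip: base change from $\Mod{R}$ to $\Mod{T}$ along $R \to T$ is a symmetric monoidal Quillen \emph{adjunction}, not a Quillen equivalence; the cofibrancy caveat you raise is the right one, but the adjunction is all you need.
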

\begin{proof}
Both functors are left adjoint to the composite in the commutative diagram of forgetful functors
\[
\xymatrix{
\Alg{T} \ar[d] \ar[r] & \Mod{T} \ar[d] \\
\Alg{R} \ar[r] & \Mod{R}. \\
}\]
Therefore, they are naturally isomorphic.
\end{proof}

The monad $\mb P_R$ is homotopically well-behaved, and descends to a monad on the homotopy category $Ho(\Mod{R})$.  We still denote this
monad by $\mb P_R$, and refer to its algebras as $H_\infty$ $R$-algebras \cite{h-infty}.  If $R$ is the sphere, we simply refer to these as $H_\infty$ algebras.

For $T$ a commutative $R$-algebra, there is a map of monads $\mb P_R \to \mb P_T$.  This represents the forgetful functor from $T$-algebras to $R$-algebras, and on the homotopy category it represents the forgetful functor from $H_\infty$ $T$-algebras to $H_\infty$ $R$-algebras.

We note that this makes the natural isomorphism adjoint to the composite
\[
\mb P_R(M) \to \mb P_R(T \wedge_R M) \to \mb P_T(T \wedge_R M).
\]

\begin{lem}
\label{lem:restrict}
Suppose $T$ is a commutative $R$-algebra and $M$ is a $T$-module which is a retract of $T \wedge_R N$ for some $R$-module $N$.  Then any $H_\infty$ $T$-algebra structure on $M$ is determined by the underlying $H_\infty$ $R$-algebra structure.

In particular, if $M$ is a wedge of suspensions of $T$, this automatically holds.
\end{lem}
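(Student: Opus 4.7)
The plan is to use the retract to reduce an $H_\infty$ $T$-algebra structure on $M$ to $R$-level data. Fix $T$-module maps $i\co M \to T\wedge_R N$ and $r\co T\wedge_R N \to M$ in $Ho(\Mod{T})$ with $r\circ i = 1_M$, and write $\phi\co \mb P_R \to \mb P_T$ for the monad map, so that the underlying $H_\infty$ $R$-algebra structure of $\xi_T\co \mb P_T(M) \to M$ is $\xi_R = \xi_T \circ \phi_M$. The goal is to recover $\xi_T$ from $\xi_R$ and the chosen retract.

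Applying $\mb P_T$ to $r\circ i = 1_M$ and composing with $\xi_T$ gives the factorization $\xi_T = (\xi_T \circ \mb P_T(r))\circ \mb P_T(i)$, so it suffices to determine $\xi_T\circ \mb P_T(r)\co \mb P_T(T\wedge_R N) \to M$. Proposition \ref{prop:freeforget} rewrites the source as $T\wedge_R \mb P_R(N)$ in $\Mod{T}$, and a $T$-module map out of an extended-scalars module is determined by its $R$-module adjoint, obtained by precomposing with the unit $\eta\wedge 1\co \mb P_R(N) \to T\wedge_R \mb P_R(N)$. The remark just before the lemma identifies this composite with $\phi_{T\wedge_R N}\circ \mb P_R(\eta\wedge 1)$, and naturality of $\phi$ applied to $r$ then rewrites the $R$-module adjoint of $\xi_T\circ \mb P_T(r)$ as
\[
\mb P_R(N) \xrightarrow{\mb P_R(\tilde r)} \mb P_R(M) \xrightarrow{\xi_R} M,
\]
where $\tilde r = r\circ (\eta\wedge 1)\co N \to M$ is the $R$-module adjoint of $r$. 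Since this expression involves only $\xi_R$ and the retract data, $\xi_T$ is determined. For the last sentence of the lemma, a wedge $M = \bigvee_\alpha \Sigma^{n_\alpha}T$ equals $T\wedge_R N$ for $N = \bigvee_\alpha \Sigma^{n_\alpha}R$, so $M$ is trivially its own retract along $T\wedge_R N$.

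The main thing to be careful about is tracking which maps are maps of $T$-modules versus $R$-modules, so that the extension/restriction-of-scalars adjunction applies where claimed. No deeper obstacle is anticipated, since the lemma is a formal consequence of Proposition \ref{prop:freeforget} combined with the explicit description of the unit of the natural isomorphism recorded just before the statement.
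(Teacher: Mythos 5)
Your proof is correct and is essentially the paper's argument: both rest on Proposition~\ref{prop:freeforget}, the extension/restriction-of-scalars adjunction, and the description of the unit of the natural isomorphism recorded just before the lemma. The only cosmetic difference is that the paper factors the retraction through $T \wedge_R M$ so as to exhibit $[\mb P_T(M),M]_{\Mod{T}} \to [\mb P_R(M),M]_{\Mod{R}}$ directly as a split injection, whereas you work with $N$ and write out the explicit reconstruction formula for $\xi_T$ from $\xi_R$ and the retract data; these amount to the same thing.
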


\begin{proof}
The free-forgetful adjunction between $T$-modules and $R$-modules allows us to factor the retraction into a sequence of $T$-module maps
\[
M \to T \wedge_R N \to T \wedge_R M \to M,
\]
with composite the identity.  Applying $\mb P_T$ and then making use of Proposition~\ref{prop:freeforget}, we get a retraction
\[
\mb P_T(M) \to T \wedge_R \mb P_R(M) \to \mb P_T(M).
\]
We then calculate homotopy classes of maps, finding that
\[
[\mb P_T(M),M]_{\Mod{T}} \to [\mb P_R(M),M]_{\Mod{R}}
\]
is the inclusion of a retract.  Therefore, any $H_\infty$ structure map $\mb P_T(M) \to M$ is determined by the induced map $\mb P_R(M) \to M$.
\end{proof}

\section{$H_\infty$ algebras over a field}

Fix a prime $p > 0$ and let $H = H\mb F_p$ be an Eilenberg-Mac Lane spectrum, equipped with a commutative ring structure.  We recall the following.

\begin{thm}
The map $\pi_*$, from $Ho(\Mod{H})$ to the category of graded $\mb F_p$-vector spaces, is an equivalence of categories.

Under this equivalence, the monad $\mb P_H$ becomes the monad sending a graded vector space $V$ to the free object on $V$ in the category of graded-commutative algebras over $\mb F_p$ equipped with Dyer-Lashof operations.
\end{thm}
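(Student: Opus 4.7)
The plan is to establish the two claims in turn. First I would verify that $\pi_*$ is an equivalence by exhibiting an explicit quasi-inverse: given a graded $\mb F_p$-vector space $V$ with a homogeneous basis $\{v_\alpha\}$ of degrees $n_\alpha$, form the wedge $\bigvee_\alpha \Sigma^{n_\alpha} H$, whose homotopy is $V$. To see essential surjectivity and full faithfulness together, observe that for any $M \in \Mod{H}$ a choice of basis for $\pi_* M$ produces a map $\bigvee_\alpha \Sigma^{n_\alpha} H \to M$ which is a $\pi_*$-isomorphism, hence a weak equivalence of $H$-modules (by an $H$-module Whitehead theorem, applicable because $\pi_* H = \mb F_p$). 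Homotopy classes of maps between wedges of suspensions of $H$ then reduce to $\mb F_p$-linear maps on homotopy groups since $[\Sigma^n H, \Sigma^m H]_{\Mod{H}} = \pi_{n-m} H$ is $\mb F_p$ when $n = m$ and zero otherwise.

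Second I would identify the monad. Both the source monad and the target monad preserve arbitrary coproducts, so it suffices to compute the underlying functor on wedges of suspensions of $H$ and to verify the universal property. On $\pi_*$, the commutative $H$-algebra $\mb P_H(M)$ acquires a graded-commutative multiplication, and Dyer-Lashof operations arise from its $m$-th summand, modeled as an extended power $(E\Sigma_m)_+ \smsh{\Sigma_m} M^{\wedge_H m}$ and analyzed via the homological algebra of $\Sigma_m$ acting on $(\pi_* M)^{\otimes m}$ as in \cite{h-infty}. Universality would then be deduced by combining the free-forgetful adjunction $[\mb P_H(M), E]_{\Alg{H}} \cong [M, E]_{\Mod{H}}$ with the identification $[M, E]_{\Mod{H}} \cong \Hom_{\mb F_p}(\pi_* M, \pi_* E)$ from the first step, matched against the universal property of the free graded-commutative Dyer-Lashof algebra on $\pi_* M$.

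The main obstacle is the extended-power calculation in the second step: the identification of the Dyer-Lashof operations, together with the Adem relations and the Cartan formula, requires substantial input on $H_*(B\Sigma_m; \mb F_p)$ and the nontrivial splittings of extended powers of Eilenberg-Mac Lane spectra. In practice the cleanest approach is to invoke the full Dyer-Lashof operation package developed in \cite{h-infty} and observe that the relations it produces are precisely those that define the free object in the target category.
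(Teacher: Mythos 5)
Your proposal is correct and is essentially the paper's own approach: the paper states this theorem as a recollection with no proof, citing \cite[IX.2.2.1]{h-infty} for the monad identification, which is exactly the input your second step defers to, while your first step supplies the standard (and routine) argument that $\pi_*$ is an equivalence because every $H$-module splits as a wedge of suspensions of $H$ over a field. The only cosmetic quibble is the claim that ``both monads preserve arbitrary coproducts'' --- the underlying endofunctor of $\mb P_H$ on $\Mod{H}$ does not, but this reduction is unnecessary anyway since every object of $Ho(\Mod{H})$ is already a wedge of suspensions of $H$.
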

(The statement about the monad follows from \cite[IX.2.2.1]{h-infty}; we have also found the references \cite{rezk-power-operations,lurie-sullivanlectures} beneficial.)

In particular, this makes the category of $H_\infty$ $H$-algebras equivalent to the category of graded-commutative algebras over $\mb F_p$ equipped with Dyer-Lashof operations.

As $H$-modules are all wedges of suspensions of $H$, Lemma~\ref{lem:restrict} implies the following.
\begin{prop}
  For any $H_\infty$ $H$-algebra $A$, the Dyer-Lashof operations on $\pi_* A$ are uniquely determined by the underlying $H_\infty$ structure on $A$.
\end{prop}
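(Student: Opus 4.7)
The plan is to apply Lemma~\ref{lem:restrict} directly, taking $R$ to be the sphere spectrum $S$ and $T = H$. The underlying ``$H_\infty$ structure on $A$'' in the proposition is a structure map $\mb P_S(A) \to A$ in $Ho(\Mod{S})$, while an $H_\infty$ $H$-algebra structure is a map $\mb P_H(A) \to A$ in $Ho(\Mod{H})$, and the map of monads $\mb P_S \to \mb P_H$ restricts the latter to the former. So what we need is the injectivity of this restriction at the level of homotopy classes.

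To invoke the second clause of Lemma~\ref{lem:restrict}, I would check that $A$, as an $H$-module, is a wedge of suspensions of $H$. This is the standard structural fact about modules over an Eilenberg--Mac Lane spectrum of a field: since $\pi_* A$ is a graded $\mb F_p$-vector space, a choice of basis gives a map from a wedge of suspensions of $H$ to $A$ that is an iso on $\pi_*$, hence an equivalence in $Ho(\Mod{H})$. With this in hand, Lemma~\ref{lem:restrict} asserts that
\[
[\mb P_H(A),A]_{\Mod{H}} \longrightarrow [\mb P_S(A),A]_{\Mod{S}}
\]
is the inclusion of a retract; in particular it is injective, so two $H_\infty$ $H$-algebra structures on $A$ with the same underlying $H_\infty$ structure must agree.

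Finally, to translate this into a statement about Dyer--Lashof operations, I would invoke the preceding theorem: the equivalence $\pi_*\co Ho(\Mod{H}) \to \Mod{\mb F_p}_{\text{gr}}$ carries $\mb P_H$ to the free algebra-with-Dyer--Lashof-operations monad, so an $H_\infty$ $H$-algebra structure on $A$ is the same data as the graded-commutative $\mb F_p$-algebra structure together with the Dyer--Lashof operations on $\pi_* A$. Since the multiplication on $\pi_* A$ is already determined by the underlying $H_\infty$ structure (it is part of the homotopy ring spectrum structure encoded by $\mb P_S$), the remaining content of the $H_\infty$ $H$-algebra structure is precisely the Dyer--Lashof operations, and these are now seen to be determined as well.

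The argument is essentially a one-line application of the earlier lemma; the only step that carries any genuine content is the observation that $H$-modules split as wedges of suspensions of $H$, which is exactly the hypothesis the lemma was engineered to exploit. No further obstacle is expected.
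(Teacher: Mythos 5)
Your proposal is correct and matches the paper's argument exactly: the paper derives this proposition in one line from Lemma~\ref{lem:restrict}, citing the fact that $H$-modules are all wedges of suspensions of $H$. Your additional unpacking of why the splitting holds and how the retract statement translates into uniqueness of the Dyer--Lashof operations is exactly the intended reading.
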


We now recall the following consequence of the Adem relations.
\begin{prop}
The functor $A \mapsto \pi_* A$, when restricted to the category of $H_\infty$ $H$-algebras where the operation $P^0$ acts as the identity, is an equivalence to the category of unstable algebras over the Steenrod algebra.
\end{prop}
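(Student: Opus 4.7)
The plan is to combine the preceding theorem with the standard comparison between Dyer-Lashof operations satisfying $P^0 = \mathrm{id}$ and the action of the unstable Steenrod algebra. By that theorem, $\pi_*$ identifies the category of $H_\infty$ $H$-algebras with graded-commutative $\mb F_p$-algebras equipped with Dyer-Lashof operations, so it suffices to show that imposing $P^0 = \mathrm{id}$ on the Dyer-Lashof side cuts out precisely the unstable algebras over the Steenrod algebra.

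In one direction, given an algebra with Dyer-Lashof operations satisfying $P^0 = \mathrm{id}$, I would verify that the Adem relations for the $P^s$ reduce to the classical Steenrod Adem relations; that the Cartan formula is already common to both sides; and that the vanishing of $P^s$ in negative excess, together with the identification of the top operation as the Frobenius coming from the algebra structure, is exactly the instability condition. In the reverse direction, given an unstable algebra $B$ over the Steenrod algebra, I would define the Dyer-Lashof structure by using the Steenrod operations in the range where they are nonzero, setting the top-excess operation equal to the $p$-th power $x \mapsto x^p$, and setting the remaining operations to zero; the Dyer-Lashof axioms then reduce to the Steenrod axioms together with naturality of the Frobenius.

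The main obstacle is bookkeeping: matching indexing conventions between the two algebras of operations and checking that each Dyer-Lashof Adem relation is either a Steenrod Adem relation or a consequence of $P^0 = \mathrm{id}$ combined with instability. This comparison is classical and can be extracted from \cite{h-infty}; the content here is simply to observe that it passes through the equivalence of the preceding theorem, so that the proposition becomes a direct translation.
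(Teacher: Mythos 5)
Your proposal is correct and matches the paper's intent: the paper gives no proof at all, simply recalling this proposition as ``a consequence of the Adem relations,'' and your argument is precisely an unpacking of that---reduce via the preceding theorem to a purely algebraic comparison, then match the Dyer-Lashof Adem relations, Cartan formula, and excess/Frobenius conditions against the unstable Steenrod axioms (with the one extra check, which your bookkeeping step subsumes, that $P^0=\mathrm{id}$ together with the Adem relations kills the operations of positive degree and forces coconnectivity).
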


The standard calculations with the Adem relations in the mod-$2$ Steenrod algebra provide the following.
\begin{prop}
\label{prop:hinftyexists}
A graded ring isomorphic to $\mb F_2[x]/(x^3)$ is the homotopy of an $H_\infty$ $H$-algebra, with the operation $Q^0$ acting as the identity, if and only if $|x|$ is a power of $2$.
\end{prop}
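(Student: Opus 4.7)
The plan is to use the equivalence of categories in the preceding proposition to reduce the question to a purely algebraic one: for which $n$ does the graded commutative ring $\mb F_2[x]/(x^3)$ with $|x| = n$ admit the structure of an unstable algebra over the mod-$2$ Steenrod algebra $\mathcal{A}_2$?  Both directions will then be handled at this algebraic level, and the sign of the grading convention does not affect the argument.

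For the necessity of $n$ being a power of $2$, I would first combine the unstable axioms with the sparsity of $\mb F_2[x]/(x^3)$---nonzero only in degrees $0$, $n$, $2n$---to deduce that $Sq^n(x) = x^2$ while $Sq^i(x) = 0$ for every $i \neq 0, n$: the vanishing for $i > n$ is instability, and for $0 < i < n$ the class $Sq^i(x)$ lies in the empty degree $n+i$.  Assuming $n$ is not a power of $2$, Serre's classical theorem---a standard consequence of the Adem relations stating that $\mathcal{A}_2$ is generated by $\{Sq^{2^j}\}$---lets me write $Sq^n = \sum_j Sq^{a_j} Sq^{b_j}$ with $0 < a_j, b_j < n$.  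Evaluating on $x$, every summand vanishes because $Sq^{b_j}(x) = 0$, contradicting $Sq^n(x) = x^2 \neq 0$.

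For sufficiency when $n = 2^k$, I would realize the required unstable algebra as a quotient of an unstable subalgebra of $H^*(\mathbb{RP}^\infty;\mb F_2) = \mb F_2[y]$ with $|y| = 1$.  The formula $Sq^i(y^{2^k}) = \binom{2^k}{i} y^{2^k + i}$, together with Lucas' theorem---which gives $\binom{2^k}{i} \equiv 1 \pmod 2$ exactly when $i \in \{0, 2^k\}$---shows that the subring generated by $x := y^{2^k}$ is closed under the Steenrod operations, endowing $\mb F_2[x]$ with $|x| = 2^k$ with an unstable $\mathcal{A}_2$-algebra structure.  The ideal $(x^3)$ is simply the portion of this subring in degrees $\geq 3 \cdot 2^k$ and is trivially preserved by the positive-degree operations, so the quotient $\mb F_2[x]/(x^3)$ is the desired unstable algebra.

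The only nontrivial input is Serre's decomposability theorem for $Sq^n$ when $n$ is not a power of two; once that is taken as standard, the remaining bookkeeping---via Lucas' theorem and the instability axiom---is routine.
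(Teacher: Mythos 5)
Your proof is correct and is essentially what the paper intends: the paper gives no details beyond the sentence invoking ``standard calculations with the Adem relations,'' and your argument---reducing via the preceding equivalence to unstable algebras over $\mathcal{A}_2$, using decomposability of $Sq^n$ for $n$ not a power of $2$ to rule out the bad degrees, and realizing the good degrees via the sub-unstable-algebra of $H^*(\mathbb{RP}^\infty;\mb F_2)$ generated by $y^{2^k}$---is precisely the standard calculation being referenced.
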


\section{Coconnective $E_\infty$ algebras}

We begin by pointing out the following consequence of the fact that connective covers are compatible with $E_\infty$ algebra structures  \cite{baker-richter-connective}.
\begin{prop}
Suppose that $A$ is an $E_\infty$ algebra which is coconnective, in the sense that $\pi_* A = 0$ for $* > 0$.  Then $A$ canonically admits the structure of an $E_\infty$ $H\pi_0(A)$-algebra.
\end{prop}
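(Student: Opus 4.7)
The plan is to realize the connective cover of $A$ as $H\pi_0(A)$ carrying its canonical $E_\infty$ structure, and to use the resulting $E_\infty$ cover map as the structure map that makes $A$ an algebra over $H\pi_0(A)$.

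First, I would observe that, since $\pi_k A = 0$ for $k > 0$, the connective cover $\tau_{\geq 0} A$ has homotopy concentrated in degree zero and equal there to $\pi_0 A$ (a commutative ring, since $A$ is $E_\infty$). As a plain spectrum, $\tau_{\geq 0} A$ is therefore equivalent to $H\pi_0(A)$.

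Next, I would invoke the Baker--Richter result \cite{baker-richter-connective}: the connective cover of an $E_\infty$ ring spectrum carries a canonical $E_\infty$ structure, and the cover map $\tau_{\geq 0} A \to A$ refines to a map of $E_\infty$ algebras. I would then identify this canonical $E_\infty$ structure on $\tau_{\geq 0} A$ with the standard one on $H\pi_0(A)$. This amounts to the uniqueness statement that any two $E_\infty$ structures on $HR$ are equivalent, which is the other half of the content of \cite{baker-richter-connective} (and is accessible via obstruction theory, since the relevant TAQ groups vanish for $HR$ over the sphere).

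Composing, one obtains a canonical $E_\infty$ ring map $H\pi_0(A) \to A$, natural in $A$, which is precisely the data of an $E_\infty$ $H\pi_0(A)$-algebra structure on $A$. The main obstacle is the uniqueness-of-$E_\infty$-structures step; by contrast, the identification of the connective cover as $H\pi_0(A)$ and the passage from an $E_\infty$ ring map to an algebra structure are both formal.
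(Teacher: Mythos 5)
Your proof is correct and takes exactly the route the paper intends: the paper states this proposition without an explicit proof, presenting it as a direct consequence of the Baker--Richter compatibility of connective covers with $E_\infty$ structures, and your write-up simply makes that argument explicit. The one step you flag as nontrivial --- identifying the canonical $E_\infty$ structure on $\tau_{\geq 0}A$ with the standard one on $H\pi_0(A)$ --- is indeed the only point requiring an extra input (uniqueness of $E_\infty$ structures on Eilenberg--Mac Lane spectra), and you correctly identify where that comes from.
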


\begin{cor}
Suppose that $A$ is a $H_\infty$ $H$-algebra which is coconnective, in the sense that $\pi_* A = 0$ for $* > 0$.  Then the $H_\infty$ structure on $A$ lifts to an $E_\infty$ structure if and only if the $H_\infty$ $H$-algebra structure lifts to an $E_\infty$ $H$-algebra structure.
\end{cor}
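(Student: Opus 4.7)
The plan is to treat the two implications separately, using the previous proposition to convert a hypothetical $E_\infty$ sphere-algebra structure into an $E_\infty$ $H$-algebra structure in the coconnective setting. The backward direction is essentially formal: the forgetful functor from $E_\infty$ $H$-algebras to $E_\infty$ algebras induced by $S \to H$ is compatible with the corresponding forgetful functors to $H_\infty$ algebras, so an $E_\infty$ $H$-algebra structure refining the given $H_\infty$ $H$-algebra automatically forgets to an $E_\infty$ sphere-algebra structure refining the underlying $H_\infty$ structure.

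For the forward direction, let $\alpha$ denote the given $H_\infty$ $H$-algebra structure and suppose $\alpha$ is refined by an $E_\infty$ sphere-algebra structure $\beta$ on $A$. I apply the previous proposition to $(A,\beta)$: coconnectivity yields a canonical $E_\infty$ $H\pi_0(A)$-algebra structure on $A$, whose unit $H\pi_0(A) \to A$ is the connective-cover map. Because $\alpha$ makes $A$ an $H$-module, multiplication by $p$ is null on $A$, and so $\pi_0(A)$ carries a unique $\mb F_p$-algebra structure. The resulting $E_\infty$ ring map $H\mb F_p \to H\pi_0(A)$ composes with the connective-cover map to give an $E_\infty$ $H$-algebra structure $\gamma$ on $A$, whose underlying $E_\infty$ sphere-algebra structure is $\beta$ by construction.

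It remains to verify that the $H_\infty$ $H$-algebra structure underlying $\gamma$ coincides with $\alpha$. The underlying $H_\infty$ sphere-algebra structures agree because both are the one underlying $\beta$. To match the $H$-module structures, I observe that both units $H \to A$ (from $\alpha$ and from $\gamma$) factor uniquely through the connective cover $H\pi_0(A) \to A$, since $H$ is connective. In each case the first factor $H \to H\pi_0(A)$ is a map of Eilenberg-Mac Lane spectra concentrated in degree zero, hence determined by its effect on $\pi_0$, and in each case it equals the unique $\mb F_p$-algebra unit of $\pi_0(A)$. With the $H$-module structures matched and the underlying $H_\infty$ sphere-algebra structures matched, Lemma~\ref{lem:restrict} forces the two $H_\infty$ $H$-algebra structures to coincide.

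The main obstacle is this last comparison of $H$-module structures: the group $[H,A]$ can be large in general, because positive-degree Steenrod operations can hit negative-degree summands of $A$, so there could in principle be many spectrum maps $H \to A$ sending $1$ to $1_A$. Coconnectivity is exactly what saves the argument: it forces every such map to factor through $H\pi_0(A)$, where the factorization is pinned down by the $\pi_0$-level ring map and is therefore rigid.
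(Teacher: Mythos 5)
Your argument is correct and follows essentially the same route as the paper's proof: the backward direction is formal, and the forward direction converts the hypothetical $E_\infty$ structure into an $E_\infty$ $H$-algebra structure via the preceding proposition and then invokes Lemma~\ref{lem:restrict}; indeed you are more careful than the paper in first checking that the two $H$-module structures on $A$ agree, which is needed before the lemma applies. One quibble with your closing remark: since the Steenrod algebra is concentrated in non-negative degrees, $[H,\Sigma^{-n}H]=0$ for $n>0$, so for a coconnective $H$-module $A$ the group $[H,A]$ is already just $\Hom(\mb F_p,\pi_0 A)$ --- the connective-cover factorization you give is the right argument, but the danger of ``positive-degree Steenrod operations hitting negative-degree summands'' does not actually arise (such operations raise, rather than lower, degree).
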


\begin{proof}
One direction is clear.  The previous proposition shows that a lifted $E_\infty$ structure automatically comes from an $E_\infty$ $H$-algebra structure, and Lemma~\ref{lem:restrict} then implies that the two $H_\infty$ $H$-algebra structures must agree, because they have the same underlying $H_\infty$ structure by assumption.
\end{proof}

Therefore, to prove nonexistence results it suffices to work entirely within $H$-modules, which is equivalent to working with $E_\infty$ algebras in the category of differential graded modules over $\mb F_p$.

\begin{prop}
Suppose that an $E_\infty$ $\mb F_p$-chain complex $A$ satisfies $H^i(A) = 0$ for $i < 0$, $H^0(A) = \mb F_p$, $H^1(A) = 0$, and $H^i(A)$ finite dimensional over $\mb F_p$.  Then $A \otimes \cF_p$ is equivalent, as an $E_\infty$ differential graded algebra over $\cF_p$, to the singular cochain complex of a space.
\end{prop}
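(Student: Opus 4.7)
The plan is to reduce the statement to Mandell's theorem from \cite{mandell-einftypadic}, which characterizes precisely which $E_\infty$ $\overline{\mb F}_p$-algebras arise as cochains of a space. First I would base-change $A$ to form the $E_\infty$ $\overline{\mb F}_p$-algebra $B = A \otimes \overline{\mb F}_p$, noting that $H^*(B) \cong H^*(A) \otimes_{\mb F_p} \overline{\mb F}_p$ as graded $\overline{\mb F}_p$-algebras so that the hypotheses on $A$ transfer directly: $B$ is coconnective, has $H^0(B) = \overline{\mb F}_p$, has $H^1(B) = 0$, and has cohomology of finite type.

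Next I would invoke Mandell's realization theorem. Mandell identifies the essential image of the $E_\infty$ cochain functor $C^*(-;\overline{\mb F}_p)$ from (a suitable homotopy category of) nilpotent, $p$-complete spaces of finite $p$-type to the homotopy category of $E_\infty$ $\overline{\mb F}_p$-algebras. The conditions that carve out the image are: coconnectivity, $H^0 = \overline{\mb F}_p$, finite-dimensionality of each cohomology group, and an algebraic condition ensuring nilpotence and simple connectivity of the associated space; in the simply connected case this last condition is captured by the vanishing of $H^1$.

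Therefore I would check that the four hypotheses on $B$ are exactly Mandell's conditions in the simply connected case. The assumption $H^0(B) = \overline{\mb F}_p$ (rather than merely a separable extension) is crucial so that the algebraic fundamental group vanishes; the assumption $H^1(B) = 0$ removes any remaining fundamental group contribution and ensures simple connectivity of the realization. Granted these, Mandell's theorem produces a simply connected $p$-complete space $X$ of finite $p$-type together with a quasi-isomorphism of $E_\infty$ $\overline{\mb F}_p$-algebras $B \simeq C^*(X;\overline{\mb F}_p)$, which is the conclusion.

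The main obstacle is not the verification itself, which is essentially bookkeeping, but properly identifying which version of Mandell's statement to invoke. In particular one must be careful that the finite-type and simple-connectivity hypotheses are strong enough to land inside Mandell's equivalence (as opposed to his weaker detection statement). The fact that $H^0 = \mb F_p$ before base change, rather than just $H^0$ being a field, is what makes the base-changed $H^0$ equal to the ground field $\overline{\mb F}_p$; without this one would not be in a position to apply the recognition theorem at all.
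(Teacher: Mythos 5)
Your proposal is correct and follows the same route as the paper: base-change to $\cF_p$, observe that the hypotheses (coconnectivity, $H^0 = \cF_p$, vanishing $H^1$, finite type) are preserved, and invoke Mandell's Characterization Theorem. The paper's own proof is just a terser version of this, additionally noting that the isomorphism $H^*(A\otimes\cF_p)\cong H^*(A)\otimes\cF_p$ respects the Dyer-Lashof operations.
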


\begin{proof}
The chain complex $A \otimes \cF_p$ is an $E_\infty$ algebra, and the homology groups of $A \otimes \cF_p$ are isomorphic to $H^i(A) \otimes \cF_p$ as algebras over the Dyer-Lashof algebra.  As a result, $A \otimes \cF_p$ satisfies the hypotheses of Mandell's Characterization Theorem \cite{mandell-einftypadic}.
\end{proof}

This application of Mandell's theorem provides us a large library of nonexistence results merely by requiring that there exist a space realizing a given ring with its Dyer-Lashof operations.

\begin{prop}
\label{prop:einftyfails}
A graded ring isomorphic to $\mb F_2[x]/(x^3)$, with Dyer-Lashof operations uniquely determined by the requirement that $Q^0$ acts by the identity, can be the homotopy ring of an $E_\infty$ algebra $A$ if and only if $|x|$ is $1$, $2$, $4$, or $8$.
\end{prop}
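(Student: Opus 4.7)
The plan is to split the biconditional into sufficiency (exhibiting $E_\infty$ models for $|x| \in \{1,2,4,8\}$) and necessity (ruling out all other values by combining the preceding proposition with Adams' solution to the Hopf invariant one problem).

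For sufficiency, in each of the four cases $|x| = 1, 2, 4, 8$, the singular cochain algebra $C^*(Y;\mb F_2)$ with $Y$ taken respectively to be $\mb{RP}^2$, $\mb{CP}^2$, $\mb{HP}^2$, or $\mb{OP}^2$ is an $E_\infty$ $\mb F_2$-algebra whose cohomology ring is $\mb F_2[x]/(x^3)$ with the required $|x|$. The Section~3 equivalence identifies $H_\infty$ $H$-algebras with unstable algebras over the Dyer--Lashof algebra, and Dyer--Lashof operations satisfying $Q^0 = \mathrm{id}$ are uniquely determined by the ring structure on $\mb F_2[x]/(x^3)$, so the resulting $H_\infty$ structure is forced to be the specified one.

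For necessity, suppose $A$ is an $E_\infty$ algebra with $\pi_* A \cong \mb F_2[x]/(x^3)$ and the prescribed $H_\infty$ structure. Proposition~\ref{prop:hinftyexists} already forces $|x| = 2^k$ for some $k \geq 0$, and the case $k=0$ is accounted for by sufficiency, so assume $k \geq 1$. Then the cohomology of $A$ is concentrated in the three degrees $0, 2^k, 2^{k+1}$, so $A$ is coconnective with $H^0(A) = \mb F_2$, with $H^1(A) = 0$ because $2^k > 1$, and with $H^*(A)$ finite dimensional. The proposition preceding this one then supplies a space $Y$ together with an equivalence $A \otimes \cF_2 \simeq C^*(Y;\cF_2)$ of $E_\infty$ $\cF_2$-algebras, so $H^*(Y;\cF_2) \cong \cF_2[x]/(x^3)$ with $|x| = 2^k$. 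The nonzero cup product $x \smile x = x^2$ realizes $\mathrm{Sq}^{2^k}(x) \neq 0$, while $\mathrm{Sq}^i(x)$ vanishes for $0 < i < 2^k$ by dimension count. Adams' resolution of the Hopf invariant one problem forbids this configuration unless $k \leq 3$, so $|x| \in \{2,4,8\}$; combined with $|x|=1$ this leaves exactly the four stated values.

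The main obstacle will be cleanly passing from the abstract $E_\infty$ $\mb F_2$-algebra $A$ to the cochains of an honest space, which is precisely the content of Mandell's characterization (invoked through the preceding proposition); once that bridge is in place, Adams' theorem does the geometric work with no further input. A minor book-keeping point worth flagging is that the $k=0$ boundary falls outside Mandell's hypothesis $H^1(A) = 0$, but this is harmless because $|x|=1$ is on the list of allowed degrees, already realized by $C^*(\mb{RP}^2;\mb F_2)$.
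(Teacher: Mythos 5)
Your proof is correct and follows essentially the same route as the paper: reduce via Mandell's characterization (the preceding proposition) to the existence of a space with cohomology ring $\mb F_2[x]/(x^3)$, and then invoke Adams' resolution of the Hopf invariant one problem. You simply spell out more of the bookkeeping (the explicit projective-plane models for sufficiency, the verification of Mandell's hypotheses, and the $|x|=1$ edge case) than the paper's terser version does.
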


\begin{proof}
The nonexistence of maps of Hopf invariant one, due to Adams, implies that there are no spaces with cohomology ring $H^*(X;\mb F_2) \cong \mb F_2[x]/(x^3)$ unless $x$ has one of the given degrees.  As $H^*(X; \mb F_2)$ is the fixed subring of $H^*(X;\cF_2)$ under the action of $Q^0$, we then find that the ring $\cF_2[x]/(x^3)$, where $Q^0 x = x$, cannot be the cohomology ring of a space $X$.
\end{proof}

Combining Propositions~\ref{prop:hinftyexists} and \ref{prop:einftyfails} yields Theorem~\ref{thm:main}.

{\bf Acknowledgements.} The author would like to thank Michael Mandell and Justin Noel for discussions related to this note.

\bibliography{../masterbib}

\providecommand{\bysame}{\leavevmode\hbox to3em{\hrulefill}\thinspace}
\providecommand{\MR}{\relax\ifhmode\unskip\space\fi MR }
% \MRhref is called by the amsart/book/proc definition of \MR.
\providecommand{\MRhref}[2]{%
  \href{http://www.ams.org/mathscinet-getitem?mr=#1}{#2}
}
\providecommand{\href}[2]{#2}
\begin{thebibliography}{EKMM97}

\bibitem[BMMS86]{h-infty}
R.~R. Bruner, J.~P. May, J.~E. McClure, and M.~Steinberger, \emph{{$H_\infty $}
  ring spectra and their applications}, Lecture Notes in Mathematics, vol.
  1176, Springer-Verlag, Berlin, 1986. \MR{836132 (88e:55001)}

\bibitem[BR08]{baker-richter-connective}
Andrew Baker and Birgit Richter, \emph{Uniqueness of {$E_\infty$}-structures
  for connective covers}, Proc. Amer. Math. Soc. \textbf{136} (2008), no.~2,
  707--714 (electronic). \MR{2358512 (2008m:55017)}

\bibitem[EKMM97]{ekmm}
A.~D. Elmendorf, I.~Kriz, M.~A. Mandell, and J.~P. May, \emph{Rings, modules,
  and algebras in stable homotopy theory}, American Mathematical Society,
  Providence, RI, 1997, With an appendix by M. Cole. \MR{97h:55006}

\bibitem[Lur07]{lurie-sullivanlectures}
Jacob Lurie, \emph{18.917 {T}opics in {A}lgebraic {T}opology: {T}he {S}ullivan
  {C}onjecture}, MIT OpenCourseWare: Massachusetts Institute of Technology,
  http://ocw.mit.edu/, Fall 2007.

\bibitem[Man01]{mandell-einftypadic}
Michael~A. Mandell, \emph{{$E_\infty$} algebras and {$p$}-adic homotopy
  theory}, Topology \textbf{40} (2001), no.~1, 43--94. \MR{1791268
  (2001m:55025)}

\bibitem[May70]{may-steenrodoperations}
J.~Peter May, \emph{A general algebraic approach to {S}teenrod operations}, The
  Steenrod Algebra and its Applications (Proc. Conf. to Celebrate N. E.
  Steenrod's Sixtieth Birthday, Battelle Memorial Inst., Columbus, Ohio, 1970),
  Lecture Notes in Mathematics, Vol. 168, Springer, Berlin, 1970, pp.~153--231.
  \MR{0281196 (43 \#6915)}

\bibitem[May78]{may-hinfty}
J.~P. May, \emph{{$H_{\infty }$} ring spectra and their applications},
  Algebraic and geometric topology ({P}roc. {S}ympos. {P}ure {M}ath.,
  {S}tanford {U}niv., {S}tanford, {C}alif., 1976), {P}art 2, Proc. Sympos. Pure
  Math., XXXII, Amer. Math. Soc., Providence, R.I., 1978, pp.~229--243.
  \MR{520544 (80e:55014)}

\bibitem[Noe]{noel-hinftyeinfty}
Justin Noel, \emph{{$H_\infty \neq E_\infty$}}, ar{X}iv:0910.3566.

\bibitem[Rez]{rezk-power-operations}
Charles Rezk, \emph{Lectures on power operations}, available at:
  http://www.math.uiuc.edu/\~{}rezk/papers.html.

\bibitem[Shi04]{shipley-convenient}
Brooke Shipley, \emph{A convenient model category for commutative ring
  spectra}, Homotopy theory: relations with algebraic geometry, group
  cohomology, and algebraic {$K$}-theory, Contemp. Math., vol. 346, Amer. Math.
  Soc., Providence, RI, 2004, pp.~473--483. \MR{2066511 (2005d:55014)}

\end{thebibliography}

\end{document}